% ------------------------------------------------------------------------
% AMS-LaTeX Paper ********************************************************
% ------------------------------------------------------------------------
% Submitted:      Trans.Amer.Math.Soc. in February 1995
% Final Version:  July 1995
% Accepted:       June 1995
% ------------------------------------------------------------------------
% This is a journal top-matter template file for use with AMS-LaTeX.
%%%%%%%%%%%%%%%%%%%%%%%%%%%%%%%%%%%%%%%%%%%%%%%%%%%%%%%%%%%%%%%%%%%%%%%%%%

\documentclass{tran-l}

%\usepackage[inactive]{srcltx} % SRC Specials for DVI Searching

% Over-full v-boxes on even pages are due to the \v{c} in author's name
\vfuzz2pt % Don't report over-full v-boxes if over-edge is small

% THEOREM Environments ---------------------------------------------------
 \newtheorem{thm}{Theorem}[section]
 \newtheorem{cor}[thm]{Corollary}
 \newtheorem{lem}[thm]{Lemma}
 \newtheorem{prop}[thm]{Proposition}
 \theoremstyle{definition}
 \newtheorem{defn}[thm]{Definition}
 \theoremstyle{remark}
 
 \newtheorem{exmp}[thm]{Example}
 \numberwithin{equation}{section}
 \newtheorem{oprob}[thm]{Open Problem}
 \newtheorem{conj}[thm]{Conjecture}
% MATH -------------------------------------------------------------------

%%% ----------------------------------------------------------------------
\begin{document}
\title[On Vertically-Recurrent Matrices]
 {On Vertically-Recurrent Matrices and Their Algebraic Properties}

\author{Hossein Teimoori Faal}

\address{Department of Mathematics and Computer Science, Allameh Tabataba'i University, Tehran, Iran}

\email{Hossein.teimoori@atu.ac.ir}

\subjclass{}

\keywords{}

\date{}

%\dedicatory{Dedicated to Professor Hydar Radjavi on occasion of
%His $70$th Birthday}

\commby{}

%%% ----------------------------------------------------------------------
\begin{abstract}
In this paper, we first introduce the new class of
vertically-recurrent matrices, using a generalization of "the
Hockey stick and Puck theorem" in Pascal's triangle. 
Then, we give
an interesting formula for the lower triangular decomposition of these matrices. We
also deal with the $m$-th power of these matrices in some special
cases. Furthermore, we present two important applications of these
matrices for decomposing \emph{admissible matrices} and matrices
which arise in the theory of \emph{ladder networks}. Finall,y we pose
some open problems and conjectures about these new kind of matrices.
\end{abstract}
%%% ----------------------------------------------------------------------
\maketitle
%%% ----------------------------------------------------------------------

\section{INTRODUCTION}

In theory of linear algebra, the general problem of the classification of the
\emph{integral matrices} which have simpler decompositions is the key in many 
theoretical and applied fields. In this paper, we intend to find the \emph{lower-triangular}  decomposition for
some new kind of matrices which we call them  \emph{matrices with
vertically recurrence relation} into Toeplitz block matrices. 
\\
Fortunately, several interesting classes of the integer-valued Toeplitz matrices can be 
nicely factorized into Pascal matrices [3]. The important point in finding this matrix decomposition is the well-known property of the Pascal triangle which is called \emph{the
hockey stick and puck theorem}. Using the generalization of the
above  theorem, one can construct a new linearly recurrence
relation which we call it the \emph{generalized hockey stick and puck}
theorem. This later one is the principle of the \emph{multiplicative
decomposition}
of the above matrices .\\
In section $2$, we start by the definition of the matrix with
\emph{vertically-recurrent} relation associated with the sequence
$\Lambda=\{ \lambda_{n} \}_{n\geq 0}$, then we investigate some of
it's properties, specially its multiplicative decomposition and
also we find its inverse matrix. In section $3$, we deal with the
power of this matrix and find it's associated sequence $\Lambda=\{
\lambda_{n} \}_{n\geq 0}$. Furthermore, we present two important
applications of this new kind of matrix for factorization of
\emph{admissible matrices} and in \emph{ladder networks}. Finally, we propose 
some open problems and conjectures about these matrices.
\newpage 

\section{The Vertically-Recurrent Matrices}

We start by motivating the main idea behind these new \emph{integral} matrices. Consider
the two dimensional 
\emph{linear recurrence} relation
among entries of the well-known  \emph{Pascal's triangle}, as follows
$$
\begin{array}{ccc}
 \overset{\bf u}{\bullet}&&\overset{\bf v}\bullet \\\\
 &  &\overset{\bf w}\bullet
\end{array}
$$
\begin{center}
Figure 1. $  w=u+v $
\end{center}
Now if we consider the two consecutive columns of the left-justified Pascal's
triangle; i.e,. the $k-1$ and the $k$th columns, we have
$$
\begin{array}{ccc}
 \overset{\bf u_l}\bullet&&\\
 \overset{\bf u_{l-1}}\bullet&&\overset{\bf v_{l-1}=w_{l-1}}\bullet\\
 \overset{\bf u_{l-2}}\bullet&&\overset{\bf v_{l-2}=w_{l-2}}\bullet\\
 \vdots &&\vdots\\
 \overset{\bf u_2}\bullet&&\overset{\bf v_2=w_2}\bullet\\
\overset{\bf u_1}\bullet&&\overset{\bf v_1=w_1}\bullet\\
 &&\overset{\bf w=w_0}\bullet\\
\end{array}
$$
\begin{center}
Figure 2. Hockey Stick and Puck Theorem
\end{center}
Therefore, at any step, after computing the $w_i$'s with respect
to $u_i$'s and $v_i$'s, fix the $u_i$ and just rewrite $v_i$, as
the
 next $w_i$, with respect to $u_{i+1}$ and $v_{i+1}$. Continuing this
 process until to get the main diagonal. It can be easily seen that $w$ is
 expressible as the sum of the entries $u_1,u_2,\ldots,u_l$. More precisely, we have
\begin{eqnarray}
w_0=w&=&u_1+v_1\nonumber\\
w_1=v_1&=&u_2+v_2\nonumber\\
&\vdots&\nonumber\\
w_{l-2}=v_{l-2}&=&u_{l-1}+v_{l-1}\nonumber\\
w_{l-1}=v_{l-1}&=&u_{l}\nonumber
\end{eqnarray}
and consequently,
\begin{equation}\label{2}
w=u_1+u_2+\cdots+u_l. 
\end{equation}
Now, if we translate the above relation into the language of recurrence relations, we
obtain
\begin{equation}\label{Vert-Rec1}
a_{n,k}=\sum_{l=k-1}^{n-1}{a_{l,k-1}},\hspace{0.5cm} (n \geq k \geq 1).
\end{equation}
The above equality is known as the \emph{vertically-recurrent} relation.
In special case for the left-justified Pascal's triangle; that is
$a_{n,k} ={n-1 \choose k-1}$,
we get the well-known \emph{hockey stick
and puck} theorem [4] as follows ( see Figure 2 )
\begin{equation}
{n\choose k}=\sum_{l=k-1}^{n-1}{{l\choose k-1}},\hspace{1cm}
(n \geq k \geq 1). 
\end{equation}
Thus, it is natural to generalize the relation (\ref{Vert-Rec1}) associated
with an arbitrary sequence $\Lambda=\{\lambda_n \}_{n\geq0 }$,
$\lambda_0=1$, as follows:
\begin{equation}\label{GenVertRec1}
a_{n,k}=\sum_{l=k-1}^{n-1}{\lambda_{n-1-l}a_{l,k-1}},\hspace{1cm}
(n \geq k\geq 1).
\end{equation}
We call the equation (\ref{GenVertRec1}) \emph{the generalized hockey stick and
puck theorem} . Now we are at the position to define our new class of matrices that we call them \emph{vertically-recurrent} matrices. 
\\ 
For combinatorial reasons, we mainly concentrate on the class of matrices in which their associate sequences have only integer values. 
\begin{defn}
Suppose $n$ and $k$ are positive integers. We define the 
\emph{vertically-recurrent} matrix
$V_{n}[{\bf \Lambda}]$
associated with the sequence
${\bf \Lambda}=\{\lambda_n \}_{n\geq0 }$ , $\lambda_0=1$, of order
$(n+1)\times(n+1)$ in the following form:
$$
(V_{n}[{\bf \Lambda}])_{ij}= \left\{
\begin{array}{ccc}
\lambda_{i} &\hspace{1cm}\mbox{if\ } i\geq0, j=0, \\
a_{i,j}&\hspace{1cm}\mbox{if\ } j\geq i \geq 1,\\
0 &\hspace{.4cm}\mbox{if\ } i<j,
\end{array}
\right.
$$
in which the entries $a_{i,j}$'s satisfy the relation (\ref{GenVertRec1}).
\end{defn}

\begin{exmp} 
	
 For ${\bf\Lambda}=\{\lambda_{n}=1\}_{n\geq0}$, we have:\\
$$V_{3}[{\bf \Lambda}]= \left[
\begin{array}{cccc}
1 & 0 & 0 & 0\\ 1 &1& 0 & 0\\ 1 & 2 &1& 0\\1 &3&3&1
\end{array}
\right],
$$
\\
where the above matrix is call the Pascal matrix $P_{n}$, for 
$n=3$
(see [5]).

\end{exmp} 

\begin{exmp} 

For ${\bf\Lambda}=\{\lambda_{n}=2^n\}_{n\geq0}$, we have:\\
$$ V_{3}[{\bf \Lambda}]= \left[
\begin{array}{cccc}
1 & 0 & 0 & 0\\ 2 &1& 0 & 0\\ 4 & 4 &1& 0\\8 &12&6&1
\end{array}
\right].
$$
\\
We note that the above matrix is the 
\emph{Pascal functional matrix}  $P_n[x]$
for $n=3$ and $x=2$ (see \cite{m4}).

\end{exmp}

Next, we start to obtain a \emph{multiplicative decomposition} of
$V_{n}[{\bf \Lambda}]$. To do this, we first define the lower
triangular \emph{Teoplitz} matrix by $T_n[{\bf
\Lambda}]=[\lambda_{i-j}]_{0\leq i\leq j\leq n}$ and the
Teoplitz-block
matrix $\overline{T}_{k}[{\bf \Lambda}]$, as follows\\
\begin{eqnarray}
\overline{T}_{k}[{\bf \Lambda}]=\left[
\begin{array}{cccc}
 I_k&0\\
 0&T_{n-k}[{\bf \Lambda}]
\end{array}\right].\nonumber
\end{eqnarray}
By convention, $\overline{T}_{0}[{\bf \Lambda}]= T_n$ and
$\overline{T}_{n}[{\bf \Lambda}]= I_{n+1}$, where $I_{n+1}$ is the
identity matrix of order $n+1$.
\begin{thm}\label{MultDecomp1}
Suppose $n$ is a natural number. Then, we have
$$
V_{n}[{\bf \Lambda}]= T_{n}[{\bf \Lambda}]([1]\oplus V_{n-1}[{\bf
\Lambda}]),
$$
in which the symbol $\oplus$ denotes the direct sum of two
matrices.
\end{thm}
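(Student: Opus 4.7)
The plan is to verify the matrix identity entry-by-entry. I would begin by naming $B := [1]\oplus V_{n-1}[{\bf \Lambda}]$ and recording its entries explicitly: $B_{0,0}=1$; $B_{0,j}=B_{i,0}=0$ for $i,j\geq 1$; and $B_{i,j}=(V_{n-1}[{\bf \Lambda}])_{i-1,j-1}$ otherwise. The idea is to compute the product $T_{n}[{\bf \Lambda}]\cdot B$ position by position and check it agrees with the entries of $V_n[{\bf \Lambda}]$. Before doing any calculation I would note that both factors are lower triangular, so the product is automatically lower triangular, which disposes of the region $i<j$ immediately.

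Next I would handle the $0$-th column as a base case. Since only $B_{0,0}=1$ contributes to the product sum there, one gets $(T_n[{\bf \Lambda}]\, B)_{i,0}=(T_n[{\bf \Lambda}])_{i,0}=\lambda_i$, which matches the first column of $V_n[{\bf \Lambda}]$ by the very definition.

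The substantive step is the region $i\geq j\geq 1$. Using $(T_n[{\bf \Lambda}])_{i,k}=\lambda_{i-k}$ for $k\leq i$ and using that $B_{0,j}=0$ while $B_{k,j}=(V_{n-1}[{\bf \Lambda}])_{k-1,j-1}$ for $k\geq 1$, the substitution $l=k-1$ turns the product into
\[
(T_n[{\bf \Lambda}]\, B)_{i,j}\;=\;\sum_{l=j-1}^{i-1}\lambda_{i-1-l}\,(V_{n-1}[{\bf \Lambda}])_{l,j-1}.
\]
The content of the proof is then to recognise this as exactly the generalized hockey-stick-and-puck recurrence (\ref{GenVertRec1}) defining $a_{i,j}$: when $j=1$ the factor $(V_{n-1}[{\bf \Lambda}])_{l,0}$ equals $\lambda_l=a_{l,0}$, while for $j\geq 2$ it equals $a_{l,j-1}$ by definition, and in both cases the sum evaluates to $a_{i,j}=(V_n[{\bf \Lambda}])_{i,j}$.

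The argument has no serious obstacle; the only care needed is index bookkeeping. The ``$[1]\oplus$'' block introduces a shift that converts the column index $j$ of $B$ into the column index $j-1$ of $V_{n-1}[{\bf \Lambda}]$, and it is this shift, together with the Toeplitz convolution supplied by $T_n[{\bf \Lambda}]$, that reproduces the recurrence (\ref{GenVertRec1}). The one subtlety is that the recurrence is stated only for $k\geq 1$, so the column $j=0$ must be treated separately as above.
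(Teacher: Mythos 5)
Your proof is correct and follows essentially the same route as the paper: an entry-by-entry computation of $T_{n}[{\bf \Lambda}]\bigl([1]\oplus V_{n-1}[{\bf \Lambda}]\bigr)$, shifting the summation index so that the Toeplitz convolution becomes exactly the recurrence (\ref{GenVertRec1}). You are in fact a bit more careful than the paper, which silently subsumes the cases $i<j$, the column $j=0$, and the identification $a_{l,0}=\lambda_l$ that you treat explicitly.
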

\begin{proof}
For each $i$ and $j$ with $i\geq j\geq0$, since the $(i,j)$-entry
of $[1]\oplus V_{n-1}[{\bf \Lambda}]$ is $(V_{n}[{\bf
\Lambda}])_{i-1,j-1}$, from the definition of matrix product and
the relation (0.4), we get
\begin{eqnarray}
(T_{n}[{\bf \Lambda}]([1]\oplus
V_{n-1}[{\bf \Lambda}]))_{i,j}
&=&\sum_{l=j}^{i}(T_{n}[{\bf \Lambda}])_{i,l}(V_{n}[{\bf \Lambda}])_{l-1,j-1}\nonumber\\
&=&\sum_{l=j-1}^{i-1}(T_{n}[{\bf \Lambda}])_{i,l+1}(V_{n}[{\bf \Lambda}])_{l,j-1}\nonumber\\
&=&\sum_{l=j-1}^{i-1}\lambda_{i-l-1}a_{l,j-1}\nonumber\\
&=&a_{i,j}=(V_{n}[{\bf \Lambda}])_{i,j}\nonumber
\end{eqnarray}
\end{proof}
Now, as an immediate consequence of Theorem \ref{MultDecomp1}, we have the
following results:
\begin{cor}
~\vspace{.2cm}
\begin{itemize}
\item[(i).] $V_{n}[{\bf \Lambda}]=\overline{T}_{n}[{\bf
\Lambda}]\overline{T}_{n-1}[{\bf
\Lambda}]\cdots\overline{T}_{1}[{\bf \Lambda}].$
\item[(ii).]$
V_{n}^{-1}[{\bf\Lambda}]=\overline{T}_{1}^{-1}[{\bf
\Lambda}]\overline{T}_{2}^{-1}[{\bf
\Lambda}]\cdots\overline{T}_{k}^{-1}[{\bf \Lambda}].$
\end{itemize}
\end{cor}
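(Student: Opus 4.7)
The plan is to derive (i) by iterating Theorem~\ref{MultDecomp1}, and then to obtain (ii) as an immediate consequence of (i) by inverting both sides.

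For part (i), I would argue by induction on $n$. The base case $n=0$ is trivial since $V_0[{\bf \Lambda}] = [1]$ and the right-hand product collapses to $I_1$. For the inductive step, Theorem~\ref{MultDecomp1} gives
$$V_n[{\bf \Lambda}] = T_n[{\bf \Lambda}] \cdot ([1] \oplus V_{n-1}[{\bf \Lambda}]),$$
and $T_n[{\bf \Lambda}]$ is by definition $\overline{T}_0[{\bf \Lambda}]$. Applying the induction hypothesis to $V_{n-1}[{\bf \Lambda}]$ (which has size $n\times n$) expresses it as an ordered product of its own block Toeplitz factors. The key algebraic manoeuvre is the block identity $(A_1 \oplus B_1)(A_2 \oplus B_2) = (A_1 A_2) \oplus (B_1 B_2)$, which lets us transport the factorization of $V_{n-1}[{\bf \Lambda}]$ through the direct sum with $[1]$: each intermediate factor $I_k \oplus T_{n-k}[{\bf \Lambda}]$ produced this way is by definition $\overline{T}_k[{\bf \Lambda}]$, and reassembling the chain yields the desired product of $\overline{T}_k[{\bf \Lambda}]$'s.

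For part (ii), taking inverses on both sides of (i) and using $(AB)^{-1} = B^{-1}A^{-1}$ immediately yields the stated formula, the factors appearing in reversed order. Each $\overline{T}_k[{\bf \Lambda}]$ is invertible because it is lower triangular with unit diagonal (since $\lambda_0 = 1$), so the inverses are well defined.

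The main care needed is bookkeeping rather than any real mathematical obstacle: the indices of the $\overline{T}_k[{\bf \Lambda}]$'s must be tracked carefully through each stage of the recursion, the inductive collapse must terminate at $V_0[{\bf \Lambda}] = [1]$ (which is absorbed into the identity factor $\overline{T}_n[{\bf \Lambda}] = I_{n+1}$), and one has to check that the resulting product order matches the one in the statement. Beyond these combinatorial verifications, the only ingredients are Theorem~\ref{MultDecomp1} and the distributivity of matrix multiplication over direct sums.
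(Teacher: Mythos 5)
Your proposal is correct and is exactly the argument the paper intends: the corollary is stated there without proof as an ``immediate consequence'' of Theorem~\ref{MultDecomp1}, namely iterating the decomposition through the direct sums and then inverting, which is what you do. One caution about the ``bookkeeping'' you defer: since the $\overline{T}_{k}[{\bf \Lambda}]$'s do not commute, the iteration actually produces $V_{n}[{\bf \Lambda}]=\overline{T}_{0}[{\bf \Lambda}]\,\overline{T}_{1}[{\bf \Lambda}]\cdots\overline{T}_{n-1}[{\bf \Lambda}]$ (with $\overline{T}_{0}[{\bf \Lambda}]=T_{n}[{\bf \Lambda}]$ and the identity factor $\overline{T}_{n}[{\bf \Lambda}]$ harmless), so the index order printed in the corollary (and the stray $k$ in (ii)) is a misprint that your argument would correct rather than reproduce.
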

In [3], the Teoplitz matrices with 
\emph{integer entries} are
investigated. Considering these matrices, we can calculate the
inverse of $\overline{T}_{n}[{\bf \Lambda}]$ by means of the
Pascal functional matrices [3] in some important special
cases. We leave the general case as an open question.\\
{\bf Case 1}. Let ${\bf\Lambda}=\{\lambda_n=\lambda\}_{n\geq0}$, then we
clearly have
\begin{equation}
\overline{T}_{n}[{\bf\Lambda}]=\lambda S_n[1],
\end{equation}
where
$$
(S_n[x])_{ij}= \left\{
\begin{array}{cc}
x^{i-j} &\hspace{.5cm}\mbox{if\ } i\geq j\geq0, \\
0&\hspace{.1cm}\mbox{if\ } i<j.
\end{array}
\right.
$$
We also have $S_n[x]=P_{n,1}[x]P_n[-x]$, in which the matrices
$P_n[x]$ and $P_{n,1}[x]$ are Pascal functional and Pascal
k-eliminated functional matrices,  respectively [5,6]. Thus, we get
$$
T_{n}[{\bf\Lambda}]=\lambda P_{n,1}[1]P_n[-1].
$$
{\bf Case 2}. Let ${\bf\Lambda}=\{\lambda_n=\lambda^{n}\}_{n\geq0}$. Now, 
it is clearly the generalization of the above case, since in this
case $T_n[{\bf\Lambda}]=S_n[\lambda]$ and consequently
$$
T_n[{\bf\Lambda}]=
P_{n,1}[\lambda]P_n[-\lambda].
$$
%\end{proof}
\section{The Powers of Vertically-Recurrent Matrices}
If we consider the Pascal functional matrix for the values
$1,2,\ldots,l$, then we observe that all of these matrices are
\emph{vertically-recurrent}. Indeed, for
$P_n[l]$ in general, the associated sequence is
${\bf\Lambda}=\{\lambda_n=l^n\}_{n\geq0}$. On the other hand, we
know that the Pascal matrix $P_n[x]$ has an exponential property
(see [5]) therefore the matrix $P_n[l]$ is just the $l$-th power
of the matrix $P_n[1]$. For the above reason, the following challenging 
question naturally arises in the context of vertically-recurrent matrices. 

\begin{quote}
\emph{If the associated sequence of the matrix $V_n[{\bf\Lambda}]$
is ${\bf\Lambda}=\{\lambda_n\}_{n\geq0}$, then what is the
associated sequence of the matrix $(V_n[{\bf\Lambda}])^m$ with
respect to the sequence $\lambda_n$?}
\end{quote}

In general case ${\bf\Lambda}=\{\lambda_n\}_{n\geq0}$, the above
question is a very challenging problem but we have given an affirmative
answer to some special and interesting cases.
\\
{\bf Case1}. Suppose $V_n[{\bf \Lambda}]$ is a matrix with
constant associated sequence
${\bf\Lambda}=\{\lambda_n=\lambda\}_{n\geq0}$. We have the
following interesting result.
\begin{prop}
{Let $V_n[{\bf\Lambda}]$ be a vertically-recurrent matrix 
with its associated sequence ${\bf\Lambda}=\{\lambda_n = \lambda \}_{n\geq0}$. Then,
the associated sequence of $(V_n[{\bf\Lambda}])^m$ is
$\lambda^m(\frac{\lambda^m-1}{\lambda-1})^{n}$}
\end{prop}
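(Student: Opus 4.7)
My strategy is to obtain a closed-form expression for $V_n[{\bf \Lambda}]$ in the constant case and then raise it to the $m$-th power using the well-known multiplicative structure of Pascal functional matrices. Throughout, write $P_n$ for the standard lower triangular Pascal matrix, $P_n[x]$ for the Pascal functional matrix with $(P_n[x])_{i,j} = \binom{i}{j} x^{i-j}$ (which satisfies $P_n[x]\,P_n[y] = P_n[x+y]$), and set $D = \mathrm{diag}(1,\lambda,\lambda^2,\ldots,\lambda^n)$.

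\textbf{Step 1.} First I would show by induction on the column index $j$ that
$$a_{i,j} \;=\; \binom{i}{j}\,\lambda^{\,j+1}, \qquad \text{equivalently}\qquad V_n[{\bf \Lambda}] \;=\; \lambda\, P_n\, D.$$
The base case $j=0$ is the definition $(V_n[{\bf \Lambda}])_{i,0}=\lambda_i=\lambda$, and the inductive step is a direct invocation of the hockey stick identity $\sum_{l=j-1}^{i-1}\binom{l}{j-1} = \binom{i}{j}$ inside the defining recurrence (\ref{GenVertRec1}).

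\textbf{Step 2.} A short entry comparison yields the commutation identity
$$D\cdot P_n[x] \;=\; P_n[\lambda x]\cdot D,$$
since both sides have $(i,j)$ entry $\binom{i}{j}\,\lambda^{i}\,x^{i-j}$.

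\textbf{Step 3.} Set $s_m := 1+\lambda+\cdots+\lambda^{m-1} = (\lambda^m-1)/(\lambda-1)$. By induction on $m$, I would prove
$$\bigl(V_n[{\bf \Lambda}]\bigr)^m \;=\; \lambda^{m}\, P_n[s_m]\, D^{m}.$$
The inductive step rewrites $V_n[{\bf \Lambda}]\cdot V_n[{\bf \Lambda}]^{m-1} = (\lambda P_n D)(\lambda^{m-1} P_n[s_{m-1}] D^{m-1})$, then commutes $D$ past $P_n[s_{m-1}]$ via Step 2 to produce $P_n[\lambda s_{m-1}]$, and finally collapses $P_n\,P_n[\lambda s_{m-1}] = P_n[1+\lambda s_{m-1}] = P_n[s_m]$ using the addition law, noting $1+\lambda s_{m-1} = s_m$.

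\textbf{Step 4.} Reading off the entries,
$$\bigl(V_n[{\bf \Lambda}]^m\bigr)_{i,j} \;=\; \lambda^{m}\,\binom{i}{j}\,s_m^{\,i-j}\,\lambda^{mj},$$
so the first column, obtained by setting $j=0$, is $\lambda^{m} s_m^{i}$; this is precisely the claimed associated sequence $\lambda^m\bigl((\lambda^m-1)/(\lambda-1)\bigr)^{n}$. One last hockey-stick computation verifies that the remaining entries satisfy the vertical recurrence (\ref{GenVertRec1}) with respect to this sequence, confirming that $V_n[{\bf \Lambda}]^m$ is itself vertically-recurrent. The degenerate case $\lambda = 1$ is covered by continuity (with $s_m = m$), recovering the known relation $P_n^{\,m} = P_n[m]$ and associated sequence $m^{n}$.

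The main obstacle is purely bookkeeping: carefully handling the interplay of the diagonal scaling $D$, the Pascal functional matrix $P_n[\cdot]$, and the prefactor $\lambda^m$ so as to arrive at a clean closed form. Once the commutation relation of Step 2 is in hand, the induction in Step 3 is mechanical, and the identification of the associated sequence in Step 4 is immediate.
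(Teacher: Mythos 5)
Your proposal is correct, but it follows a genuinely different route from the paper's. You first derive the closed form $V_n[{\bf \Lambda}]=\lambda P_n D$ with $D=\mathrm{diag}(1,\lambda,\ldots,\lambda^n)$ (the hockey-stick induction giving $a_{i,j}={i\choose j}\lambda^{j+1}$ is fine), then compute $(V_n[{\bf \Lambda}])^m=\lambda^m P_n[s_m]D^m$ via the commutation $D\,P_n[x]=P_n[\lambda x]\,D$ and the addition law $P_n[x]P_n[y]=P_n[x+y]$, and finally read the sequence off the first column; the closing check that the entries $\lambda^m{i\choose j}s_m^{\,i-j}\lambda^{mj}$ satisfy the vertical recurrence (\ref{GenVertRec1}) with weights $\mu_l=\lambda^m s_m^{\,l}$ is indeed necessary and does go through (it is again the hockey-stick sum $\sum_{l=j-1}^{i-1}{l\choose j-1}={i\choose j}$). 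The paper never writes a closed form: it notes that in the constant case the entries obey the two-term Pascal-like recurrence $a_{n,k}=\lambda a_{n-1,k-1}+a_{n-1,k}$, invokes Lemma 3.2 (the composition rule $\alpha''=\alpha\alpha'$, $\beta''=\beta+\alpha\beta'$ for such triangles, taken from [7]) together with induction on $m$ to get the two-term recurrence for $(V_n[{\bf \Lambda}])^m$ with coefficients $\lambda^m$ and $\frac{\lambda^m-1}{\lambda-1}$ (the exponent $n$ printed on that coefficient in the paper is a slip), and then converts this into the vertical recurrence. Your approach buys more — explicit entries of every power, not just the associated sequence — and avoids the external lemma; the paper's approach is the one that transfers directly to its Case 2 ($\lambda_n=\lambda^n$, Theorem 3.3) and to its general machinery. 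Two small remarks: like the paper, you read the hypothesis as $\lambda_i=\lambda$ for all $i\geq 0$ (so $\lambda_0=\lambda$ rather than $1$ as in Definition 2.1), which is exactly the reading under which the stated sequence $\lambda^m\big(\frac{\lambda^m-1}{\lambda-1}\big)^n$ comes out right; and for $\lambda=1$ you need no continuity argument, since $s_m=1+\lambda+\cdots+\lambda^{m-1}$ equals $m$ by definition there and all your identities are stated in terms of $s_m$.
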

\begin{proof}
{The above statement is equal to prove that the recurrence
relation for entries of $(V_n[{\bf\Lambda}])^m$ is, as follows: }
\end{proof}
\begin{equation}
a_{n,k}=\lambda^m a_{n-1,k-1}+(\frac{\lambda^m-1}{\lambda-1})^n
a_{n-1,k}.
\end{equation}
Since, considering the above identity and mathematical induction
we are able to prove,
$$
a_{n,k}=\sum_{k-1}^{n-1}\big[\lambda^m(\frac{\lambda^m-1}{\lambda-1})\big]^{n-1-l}a_{l,k-1},
$$
Namely, $\lambda^m(\frac{\lambda^m-1}{\lambda-1})^n$ is the
associated sequence for $(V_n[\Lambda])^m$. For proving the
equivalence statement we just need to consider the following
result from [7].

\begin{lem}
Suppose $\alpha, \alpha', \beta$, $\beta'$ are four real numbers.
Also let $A=[a_{ij}]$, $B=[b_{ij}]$ be two lower triangular
matrices where their entries satisfy the following recurrence relations
respectively,
\begin{eqnarray}
&&\left\{
\begin{array}{cc}
a_{n,k}=\alpha a_{n-1,k-1}+\beta a_{n-1,k},&~~~(n\geq k\geq1),\\
a_{n,0}=1& n\geq 0,\\
a_{n,k}=0 & k>n,
\end{array}
\right.\\
&&\left\{
\begin{array}{cc}
b_{n,k}=\alpha' b_{n-1,k-1}+\beta' b_{n-1,k},&~~~(n\geq k\geq1),\\
b_{n,0}=1&n\geq 0,\\
b_{n,k}=0 &k>n.
\end{array}
\right.
\end{eqnarray}
{\it If $AB=[c_{ij}]$ then, there are real numbers
$\alpha''=\alpha\alpha'$ and $\beta''=\beta+\alpha\beta'$, such
that }
\begin{eqnarray}
\left\{
\begin{array}{cc}
c_{n,k}=\alpha'' c_{n-1,k-1}+\beta'' c_{n-1,k},&(n\geq k\geq1),\\
c_{n,0}=\sum_{i=0}^{n}a_{n,i}&n\geq 0,\\
\hspace{-.8cm}c_{n,k}=0~~~~ &k>n.
\end{array}
\right.
\end{eqnarray}
\end{lem}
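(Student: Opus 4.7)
The plan is to prove the recurrence for $c_{n,k}$ by a direct manipulation of the matrix product formula, substituting the recurrences for $A$ and $B$ in turn and keeping careful track of boundary terms. Since $C=AB$ with both factors lower triangular, we have
$$
c_{n,k}=\sum_{i=0}^{n}a_{n,i}b_{i,k}.
$$

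First, I would isolate the $i=0$ contribution, writing $c_{n,k}=a_{n,0}b_{0,k}+\sum_{i=1}^{n}a_{n,i}b_{i,k}$, so that the recurrence $a_{n,i}=\alpha a_{n-1,i-1}+\beta a_{n-1,i}$ (which is only valid for $i\geq 1$) can be applied to the remaining sum. Reindexing the first of the two resulting sums by $j=i-1$ turns it into $\alpha\sum_{j=0}^{n-1}a_{n-1,j}b_{j+1,k}$. Now I would apply the recurrence for $B$ in the shifted form $b_{j+1,k}=\alpha'b_{j,k-1}+\beta'b_{j,k}$ (valid for $k\geq 1$) inside this sum, and distribute to obtain the two sums $\alpha\alpha'\sum_{j}a_{n-1,j}b_{j,k-1}$ and $\alpha\beta'\sum_{j}a_{n-1,j}b_{j,k}$. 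By the definition of matrix product these are precisely $\alpha\alpha'c_{n-1,k-1}$ and $\alpha\beta'c_{n-1,k}$, which identifies $\alpha''=\alpha\alpha'$ and contributes $\alpha\beta'$ to $\beta''$.

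The second sum from the first substitution, namely $\beta\sum_{i=1}^{n}a_{n-1,i}b_{i,k}$, is almost $\beta\,c_{n-1,k}$ but with slightly different index limits. This is where the main bookkeeping happens, and it is also the only technical obstacle: I would use $a_{n-1,n}=0$ (lower-triangularity of $A$) to extend the upper limit harmlessly, and add and subtract the missing $i=0$ term to write this sum as $\beta\,c_{n-1,k}-\beta\,a_{n-1,0}b_{0,k}$. Combining everything yields
$$
c_{n,k}=a_{n,0}b_{0,k}-\beta\,a_{n-1,0}b_{0,k}+\alpha\alpha'c_{n-1,k-1}+(\beta+\alpha\beta')c_{n-1,k}.
$$
For $k\geq 1$ the residual boundary term vanishes because $b_{0,k}=0$ by lower-triangularity of $B$, and we are left with the desired recurrence with $\beta''=\beta+\alpha\beta'$.

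Finally, the boundary identity $c_{n,0}=\sum_{i=0}^{n}a_{n,i}$ follows at once by specializing $k=0$ in the product formula and using $b_{i,0}=1$ for all $i\geq 0$. The only real obstacle in the argument is the careful handling of the index shifts and the two boundary rows/columns ($i=0$, $i=n$), and these are forced to collapse by the lower-triangular boundary conditions; no induction on $n$ is needed beyond this single algebraic identity.
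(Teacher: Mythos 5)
Your proof is correct and is essentially the same argument as the paper's: a direct algebraic expansion of $c_{n,k}=\sum_{l} a_{n,l}b_{l,k}$ using the two recurrences together with the lower-triangular boundary conditions, with no induction needed. The only difference is cosmetic --- the paper starts from $\alpha\alpha'\,c_{n-1,k-1}+(\beta+\alpha\beta')\,c_{n-1,k}$ and manipulates it into $c_{n,k}$ (applying the $B$-recurrence first, then the $A$-recurrence), whereas you run the same computation in the forward direction with the recurrences applied in the opposite order.
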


\begin{proof}
	
Let $C=AB=[c_{n,k}]$. By the definition of the product of two matrices, we conclude that 
$
c_{n,k} = \sum_{l=k}^{n} a_{n,l} b_{l,k} ~(n\geq k \geq 1)
$	
. Hence, this immediately implies that
$
c_{n,0} = \sum_{l=0}^{n} a_{n,l} b_{l,0} = 
\sum_{l=0}^{n} a_{n,l} ~ (n\geq 0)
$
and 
$
c_{n,k} = 0~(k > n)
$
. 
Moreover, we have 
\begin{equation}\label{keyrec1}
c_{n-1,k-1} = 
\sum_{l=k-1}^{n-1} a_{n-1,l} b_{l,k-1}, ~~~
c_{n-1,k} = 
\sum_{l=k}^{n-1} a_{n-1,l} b_{l,k}.  
\end{equation}	
Put 
$
I_{n,k} = \alpha \alpha^{'} c_{n-1,k-1} + (\beta + \alpha \beta^{'})
c_{n-1,k}
$
. Then, we have
 	
\begin{eqnarray}
I_{n,k} & = & \alpha \alpha^{'} \sum_{l=k-1}^{n-1} a_{n-1,l} b_{l,k-1} + (\beta + \alpha \beta^{'}) 
\sum_{l=k}^{n-1} a_{n-1,l} b_{l,k}, \nonumber\\ 
& = & 
\Bigg[
\alpha \alpha^{'} \sum_{l=k-1}^{n-1} a_{n-1,l} b_{l,k-1} 
+ \alpha \beta^{'} \sum_{l=k}^{n-1} a_{n-1,l} b_{l,k}
\Bigg] \nonumber\\  
& + & \beta \sum_{l=k}^{n-1} a_{n-1,l} b_{l,k}, \nonumber\\
& = & \alpha 
\Bigg[
 \sum_{l=k}^{n-1} a_{n-1,l}
\Big( \alpha^{'} b_{l,k-1} 
+  \beta^{'} b_{l,k} \Big) + a_{n-1,k-1} b_{k-1,k-1}
\Bigg] \nonumber\\
& + &  \beta \sum_{l=k}^{n-1} a_{n-1,l} b_{l,k}, \nonumber\\
& = & \alpha \Bigg[ \sum_{l=k}^{n-1} b_{l+1,k} + a_{n-1,k-1}
a_{n-1,k-1} \Bigg] \nonumber\\ 
& + &  \beta \sum_{l=k+1}^{n} a_{n-1,l} b_{l,k} + 
\beta \Big( a_{n-1,k-1} b_{k,k} - a_{n-1,n} b_{n,k} \Big)
\nonumber\\
& = & \sum_{l=k+1}^{n} 
\Big( 
\alpha a_{n-1,l-1} + \beta a_{n-1,l} 
\Big) b_{l,k} + 
\Big( 
\alpha a_{n-1,l-1} + \beta a_{n-1,k} 
\Big) \nonumber\\
& = & 
\sum_{l=k+1}^{n} a_{n,l} b_{l,k} + a_{n,k} \nonumber \\
& = & \sum_{l=k}^{n} a_{n,l} b_{l,k}=c_{n,k}, \nonumber
\end{eqnarray}	
as required.

\end{proof}

Now the equivalence statement is easily proved considering the
above lemma and the mathematical induction.
\\
{\bf Case2.} $\lambda_n=\lambda^n$. In this case, we use the above
lemma again to obtain the following theorem,
\begin{thm}
{Let $V_n[\Lambda]$ be a matrix with vertically recurrent relation
and it's associated sequence $\lambda_n=\lambda^n$. Then, the
associated sequence of $(V_n[\Lambda])^m$ is}
\end{thm}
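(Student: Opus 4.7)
The plan is to reduce this case to the Pascal functional matrix setting and then invoke the exponential law $P_n[x]P_n[y] = P_n[x+y]$.

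First I would verify by induction on the column index that the generalized hockey stick and puck relation (\ref{GenVertRec1}) applied with $\lambda_n = \lambda^n$ produces precisely the entries $a_{i,j} = \binom{i}{j}\lambda^{i-j}$; that is, $V_n[\mathbf{\Lambda}] = P_n[\lambda]$. This extends the observation already made for $\lambda = 2$ in the preceding example, and the inductive step collapses to the classical hockey stick identity with the factor $\lambda^{n-k}$ pulled out of the sum.

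Next I would invoke the exponential property $P_n[x]P_n[y] = P_n[x+y]$, which is a one-line consequence of the binomial identity $\binom{i}{k}\binom{k}{j} = \binom{i}{j}\binom{i-j}{k-j}$ combined with the binomial theorem. Iterating this law $m$ times gives
\begin{equation*}
(V_n[\mathbf{\Lambda}])^m = (P_n[\lambda])^m = P_n[m\lambda],
\end{equation*}
whose first column is $\{(m\lambda)^i\}_{i \geq 0}$. Since the first column of a vertically-recurrent matrix is precisely its associated sequence, the associated sequence of $(V_n[\mathbf{\Lambda}])^m$ should read off as $\{(m\lambda)^n\}_{n \geq 0}$. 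A short verification against (\ref{GenVertRec1}), substituting $\lambda_{n-1-l} = (m\lambda)^{n-1-l}$ and $a_{l,k-1} = \binom{l}{k-1}(m\lambda)^{l-k+1}$, then confirms full consistency of the entries $\binom{n}{k}(m\lambda)^{n-k}$ with the vertical recurrence via the hockey stick identity.

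The subtle point I expect to be the main obstacle is that the preceding Lemma, though visibly tailored to this situation, requires $a_{n,0} = 1$, which fails for $P_n[\lambda]$ when $\lambda \neq 1$; a direct iteration of the $(\alpha,\beta) \mapsto (\alpha\alpha',\beta+\alpha\beta')$ recursion therefore gives the spurious pair $(\lambda^m,\lambda(\lambda^m-1)/(\lambda-1))$ instead of the correct $(m\lambda,m\lambda)$. A clean workaround is to conjugate by $D = \mathrm{diag}(1,\lambda,\lambda^2,\ldots,\lambda^n)$: one checks directly that $D^{-1} P_n[\lambda]\,D = P_n$, so Case 1 applies after conjugation and the factor $\lambda$ emerges cleanly upon conjugating back. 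Either route---conjugation plus Case 1, or the exponential law---yields the same associated sequence $(m\lambda)^n$.
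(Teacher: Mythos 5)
Your main line of argument is correct, and it reaches the stated conclusion $(\lambda m)^n$, but it is a genuinely different route from the paper's. The paper argues intrinsically: it (intends to) show that $V_n[{\bf\Lambda}]$ with $\lambda_n=\lambda^n$ satisfies the two-term recurrence $a_{n,k}=a_{n-1,k-1}+\lambda a_{n-1,k}$, then iterates the composition Lemma (the $(\alpha,\beta),(\alpha',\beta')\mapsto(\alpha\alpha',\beta+\alpha\beta')$ rule) to get $a_{n,k}=a_{n-1,k-1}+m\lambda\, a_{n-1,k}$ for $(V_n[{\bf\Lambda}])^m$ (the printed coefficients $2m\lambda$ and $2\lambda$ are slips), and finally unrolls this two-term recurrence into the vertical recurrence with weights $(m\lambda)^{n-1-l}$. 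You instead identify $V_n[{\bf\Lambda}]=P_n[\lambda]$ explicitly (entries $\binom{i}{j}\lambda^{i-j}$, via the hockey stick identity) and invoke the additive law $P_n[x]P_n[y]=P_n[x+y]$, so $(V_n[{\bf\Lambda}])^m=P_n[m\lambda]$ and the associated sequence is read off the first column; this buys closed forms for all entries of the power, avoids the composition Lemma entirely, and is arguably cleaner, while the paper's route is the one that generalizes to its Case 1 and to sequences without such a closed form. One caveat on your side remark: the "spurious pair" $(\lambda^m,\lambda\frac{\lambda^m-1}{\lambda-1})$ would only arise from seeding the iteration with $(\alpha,\beta)=(\lambda,\lambda)$, which is not the recurrence $P_n[\lambda]$ satisfies; the correct seed is $(1,\lambda)$ (valid for all $n\geq k\geq 1$, including $k=1$, even though the first column is $\lambda^n$ rather than $1$), and iterating gives $(1,m\lambda)$ -- not your claimed "correct $(m\lambda,m\lambda)$", which is not a valid recurrence for $P_n[m\lambda]$. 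So the real defect in the lemma route is only that its boundary hypotheses $a_{n,0}=b_{n,0}=1$ and its first-column formula $c_{n,0}=\sum_i a_{n,i}$ do not literally apply when $\lambda\neq 1$; your conjugation by $D=\mathrm{diag}(1,\lambda,\ldots,\lambda^n)$ is a legitimate way to repair that, as is your direct exponential-law argument.
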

\begin{equation}
\lambda_n=(\lambda{m})^n.
\end{equation}
\begin{proof}
Considering the same argument in theorem $0.4$, It is necessary to
prove the following recurrent relation for $(V_n[\Lambda])^m$:
$$
a_{n,k}=a_{n-1,k-1}+(2m)\lambda a_{n-1,k}.
$$
But, applying the lemma $0.5$, it is just necessary to prove the
special case $m=1$. Namely, $$a_{n,k}=a_{n-1,k-1}+2\lambda
a_{n-1,k},$$\\
Now it can be easily seen that,
$$a_{n,k}=\sum_{k-1}^{l-1}(2\lambda)^{n-1-l}a_{l,k-1}$$
\end{proof}

\section{Applications}

In this section, we present two applications of vertically-recurrent matrices in
the area of \emph{integral matrices} and  
\emph{electrical engineering}
.
\\
As our first application, we mention an interesting class of integral  matrices which are called \emph{admissible matrices}.
To do so, we consider infinite matrices $A=(a_{n,k})$, indexed by
$\{0,1,2,\ldots\}$, and denote it by $r_m=\{a_{m,0},a_{m,0},\ldots\}$
the $m$th row.
\begin{defn}
$A=(a_{n,k})$ is called admissible [8] if
\begin{enumerate}
	\item 
 $a_{n,k}=0$
for $n<k$, $a_{n,n}=1$ for all $n$ (that is, $A$ is lower
triangular with main diagonal equal to $1$).

	\item 
$r_m.r_n=(a_{m+n,0})$ for all $m,n$, where
$r_m.r_n=\sum_{k}a_{mk}a_{nk}$ is the usual inner product.
	
\end{enumerate}

\end{defn}
Here, we consider some few examples of these matrices and show
that they are indeed vertically-recurrent matrices.
\\
An interesting theorem in [8], states that all admissible matrices
are characterized by sequence $s_0=b_0$; $s_n=b_n-b_{n-1}, n\geq1$
in which $b_{n}=a_{n+1,n}$. Thus, any sequence
$s=\{s_0,s_1,\ldots,s_n,\ldots\}$ will present an admissible
matrix $A=(a_{n,k})$. 

\begin{prop}
	
Let $A=(a_{n,k})$ ba an \emph{admissible} matrix with $a_{n+1,n} = b_{n}$ for all $n$.
Set $s_{0} = b_{0}$, $s_{1} = b_{1} - b_{0}, \ldots, s_{n} = b_{n} - b_{n-1}, \ldots$. Then, we have 
\begin{eqnarray}\label{keyadmis1}
a_{n,k} & = & a_{n-1,k-1} + s_{k}  a_{n-1,k} + a_{n-1,k+1} \hspace{0.5cm} (n\geq 1) \nonumber\\
a_{0,0} & = & 1, \hspace{1cm} a_{0,k} = 1 \hspace{0.5cm} \textit{for} ~~~(k>0).
\end{eqnarray} 	
Conversely, if $a_{n,k}$ is given by 
the recursion (\ref{keyadmis1}), 
then $(a_{n,k})$ is an \emph{admissible} matrix
with $a_{n+1,n} = 
s_{0} + \cdots + s_{n} $.  
\end{prop}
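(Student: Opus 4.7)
The strategy is to recognize the recurrence (\ref{keyadmis1}) as multiplication by an infinite symmetric tridiagonal operator. Let $T$ be the infinite symmetric matrix with $T_{k,k}=s_k$, $T_{k,k-1}=T_{k-1,k}=1$, and all other entries zero. Viewing each row $r_m=(a_{m,0},a_{m,1},\ldots)^{\top}$ as a column vector, (\ref{keyadmis1}) is exactly the statement $r_m=T r_{m-1}$, which iterates to $r_m=T^m e_0$ where $e_0=(1,0,0,\ldots)^{\top}$. (I read the clause ``$a_{0,k}=1$ for $k>0$'' in the statement as a typo for $a_{0,k}=0$, since otherwise it contradicts lower triangularity.)

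\textbf{Converse direction.} Assuming the recurrence and the initial data, I verify both admissibility axioms. A short induction on $n$ gives lower triangularity and $a_{n,n}=1$: if $k>n$ then all three terms on the right of (\ref{keyadmis1}) vanish, and $a_{n,n}=a_{n-1,n-1}+s_n\cdot 0+0$ telescopes down to $a_{0,0}=1$. For the inner-product axiom, symmetry of $T$ collapses it to a one-line tautology,
\begin{equation*}
r_m\cdot r_n=\langle T^m e_0,\,T^n e_0\rangle=\langle e_0,\,T^{m+n}e_0\rangle=(r_{m+n})_0=a_{m+n,0}.
\end{equation*}
Finally, evaluating (\ref{keyadmis1}) on the subdiagonal entry $(n+1,n)$ gives $a_{n+1,n}=a_{n,n-1}+s_n$, and telescoping down to $a_{1,0}=s_0$ yields the stated formula $a_{n+1,n}=s_0+s_1+\cdots+s_n$.

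\textbf{Forward direction.} Suppose $A$ is admissible with $a_{n+1,n}=b_n$, and set $s_n$ as prescribed. The characterization theorem of [8] cited just before the proposition asserts that an admissible matrix is uniquely determined by the data $\{s_n\}$. The converse direction just proved produces, from the same $\{s_n\}$, an admissible matrix $A'$ satisfying (\ref{keyadmis1}) and with $a'_{n+1,n}=s_0+\cdots+s_n=b_n$. Uniqueness forces $A'=A$, so $A$ itself obeys (\ref{keyadmis1}).

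\textbf{Main obstacle.} The only delicate point is boundary bookkeeping: at $k=0$ the term $a_{n-1,-1}$ must be interpreted as $0$, and one should verify the $n=1$ base case separately for $k=0,1$ and $k\ge 2$ so the induction for $r_m=T^m e_0$ is clean. Beyond this, the entire argument rests on the single structural observation that $T$ is symmetric, which is what converts the bilinear condition $r_m\cdot r_n=a_{m+n,0}$ into the trivial identity $\langle e_0,T^{m+n}e_0\rangle=(T^{m+n}e_0)_0$.
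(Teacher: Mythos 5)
The paper gives no proof of this proposition at all (it is quoted from Aigner [8]), so there is nothing to compare your route against; judged on its own, your converse direction is correct and complete. Reading $a_{0,k}=0$ for $k>0$ (the ``$=1$'' in the statement is indeed a typo), the recursion is exactly $r_n=Tr_{n-1}$ for the symmetric tridiagonal $T$; triangularity, the unit diagonal and $a_{n+1,n}=s_0+\cdots+s_n$ follow by the inductions you indicate, and since each $r_n=T^ne_0$ has finite support the manipulation $\langle T^me_0,T^ne_0\rangle=\langle e_0,T^{m+n}e_0\rangle=a_{m+n,0}$ is legitimate.

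The genuine gap is in the forward direction. The ``characterization theorem of [8]'' you invoke is essentially the proposition itself: the sentence preceding the proposition is an informal restatement of it, and in [8] the correspondence between admissible matrices and sequences $s$ is obtained from this very recursion, so quoting it begs the question. What your reduction actually needs is only the injectivity claim --- an admissible matrix is determined by its subdiagonal $b_n$ --- and that must be proved, not cited. Fortunately it has a short direct proof that completes your argument: induct on rows. Given $r_0,\dots,r_{n-1}$, the entries $a_{n,n}=1$ and $a_{n,n-1}=b_{n-1}$ are forced, and for $0\le k\le n-2$ the admissibility relation $r_k\cdot r_n=a_{n+k,0}$ yields
\begin{equation*}
a_{n,k}=a_{n+k,0}-\sum_{j<k}a_{k,j}a_{n,j},\qquad a_{n+k,0}=r_{k+1}\cdot r_{n-1},
\end{equation*}
where $k+1\le n-1$, so the right-hand side involves only already-determined rows; an inner induction on $k$ then fixes all of row $n$. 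With this uniqueness lemma supplied, your scheme (forward direction $=$ converse direction $+$ uniqueness, using $s_0+\cdots+s_n=b_n$ by telescoping) is a valid and fully self-contained proof, which is more than the paper itself provides.
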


For example the corresponding admissible
matrix for sequence $s=\{1,1,1,\ldots\}$ is
$$
\left[
\begin{array}{ccccc}
1 & 0 & 0 & 0&0\\ 1 &1& 0 & 0&0\\ 2 & 2 &1& 0&0\\4
&5&3&1&0\\9&12&9&4&1
\end{array}
\right],
$$
and for the sequence $s=\{1,2,2,\ldots\}$ is
$$
\left[
\begin{array}{ccccc}
1 & 0 & 0 & 0&0\\ 1 &1& 0 & 0&0\\ 2 & 3 &1& 0&0\\5
&9&5&1&0\\14&28&20&7&1
\end{array}
\right].
$$
Clearly the first matrix is a vertically recurrence matrix with
associated sequence $\mathbf{\Lambda}=\{1,1,2,4,9,\ldots\}$ and the second
one with $\mathbf{\Lambda}=\{1,2,5,14,\ldots\}$ and these are just the
first columns of the above matrices.

\section{Ladder Networks}

The transfer ratio $T_k~ (k=0, 1, 2, \ldots, n)$ of the output to
input signal (voltage or current) along the network (Figure $3$) is
determined by a polynomial in $x$ of the corresponding degree, in
which $x$ determined by the product of impedance of a longitudinal
branch and
admittance of transversal branch [9]. \\
It can be determined from a solution of the following recurrence
equation,
\begin{eqnarray}
& & a_{k+1}-(2+x)a_k+a_{k-1}=0; \nonumber\\
& & a_1=(1+x)a_0,
\end{eqnarray}
where $a_0$ denotes a known signal at the input port of the first
cell and  $a_k$ is the corresponding signal at the k-port of the
network (e.g., $a_k=V_k$ as shown in Figure $3$).
$$
\unitlength 1mm \linethickness{0.4pt}
\begin{picture}(200.33,35.67)
\put(9.00,10.33){\line(1,0){111.00}}
\put(16.67,12.50){\line(0,-1){2.4}}
\put(14.33,12.67){\framebox(5.00,11.67)[cc]{Y}}
\put(34.33,12.20){\line(0,-1){1.9}}
\put(31.67,12.33){\framebox(5.00,11.67)[cc]{Y}}
\put(51.2,12.50){\line(0,-1){2.4}}
\put(48.67,12.67){\framebox(5.00,11.67)[cc]{Y}}
\put(70.2,12.50){\line(0,-1){2.4}}
\put(67.67,12.67){\framebox(5.00,11.67)[cc]{Y}}
\put(83,12.5){\line(0,-1){1.9}}
\put(80.33,12.33){\framebox(5.00,11.67)[cc]{Y}}
\put(97.8,10.33){\line(0,1){1.7}}
\put(95.00,12.33){\framebox(5.00,11.67)[cc]{Y}}
\put(111.00,10.33){\line(0,1){1.9}}
\put(108.67,12.33){\framebox(5.00,11.67)[cc]{Y}}
\put(119.67,10.33){\line(1,0){16.33}}
\put(127.5,10.33){\line(0,1){1.5}}
\put(125.00,12.00){\framebox(5.00,11.67)[cc]{Y}}
\put(16.67,24.5){\line(0,1){3}} \put(9.3,27.33){\line(1,0){10.8}}
\put(20.33,25.00){\framebox(9.33,4.33)[cc]{Z}}
\put(34.67,27){\line(0,-1){2.8}} \put(30,27.00){\line(1,0){8.5}}
\put(38.67,25){\framebox(8.33,4.33)[cc]{Z}} ~
\put(47.00,27.00){\line(1,0){10.2}}
 \put(51,27){\line(0,-1){2.8}}
\put(57.33,25){\framebox(8.33,4.33)[cc]{Z}}
\put(70,27){\line(0,-1){2.6}}
 \put(65.67,27){\line(1,0){20}}
\put(86.00,25.00){\framebox(8.33,4.33)[cc]{Z}}
\put(94.33,27.00){\line(1,0){20.00}}
 \put(82.67,24){\line(0,1){3}}
\put(114.33,25){\framebox(8.33,4.33)[cc]{Z}}
\put(122.67,27){\line(1,0){14.67}}
 \put(8.33,27.3){\circle*{2}}
\put(8.00,10.00){\circle*{2}} \put(136.67,10.00){\circle*{2}}
\put(136.33,27.3){\circle*{2}} \put(97.50,24){\line(0,1){3}}
\put(111.50,24){\line(0,1){3}} \put(127.5,24){\line(0,1){3}}
\put(7.67,12.00){\vector(0,1){12.67}}
\put(39.00,11.00){\vector(0,1){11.33}}
\put(56.00,11.33){\vector(0,1){11.33}}
\put(74.33,11.67){\vector(0,1){11.33}}
\put(102.00,11.67){\vector(0,1){11.33}}
\put(136.67,12.33){\vector(0,1){12.67}}
\put(15.5,28.67){\mbox{$0$}}
 \put(33.67,28.33){\mbox{$1$}}
\put(50.00,28.00){\mbox{$2$}}
 \put(69.33,27.67){\mbox{$3$}}
\put(77.67,27.67){\mbox{$k-1$}}
 \put(97.00,28.00){\mbox{$k$}}
\put(106.33,27.67){\mbox{$n-1$}}
 \put(125.67,27.33){\mbox{$n$}}
\put(132.33,15.67){\mbox{$V_n$}}
 \put(103.33,15.00){\mbox{$V_k$}}
 \put(76.00,14.00){\mbox{$V_3$}}
 \put(58.00,13.67){\mbox{$V_2$}}
 \put(41.00,12.67){\mbox{$V_1$}}
\put(9.00,13.67){\mbox{$V_0$}}
\end{picture}
$$
\begin{center}
{\bf Figure 3}. Electrical Ladder Network
\end{center}
The ratio $T_k$ follows from the relation
$$
T_k=\frac{a_k}{a_0},~~ k=0, 1, 2, \ldots, n.
$$
It is easy to see that $T_k$ is determined by a polynomial in $x$
of the $k$th degree, so we can write
$$
T_k=\sum_{m=0}^{k}p_{k,m}x^{m},~~ k=0, 1, 2, \ldots, n.
$$
\\
From the direct inspection of the above expression, we have that
\begin{align*}
T_0&=1,\\ T_1&=1+x,\\T_2&=1+3x+x^2,\\T_3&=1+6x+5x^2+x^3,\\
T_4&=1+10x+15x^2+7x^3+x^4,\\T_5&=1+15x+35x^2+28x^3+9x^4+x^5.
\end{align*}
 Now, if we define the matrix $MNT$, \emph{modified numerical
triangle} [9], as follows
$$
(MNT)_{ij}=\left\{
\begin{array}{cc}
p_{i,j} & i\geq j\geq0;\\
0& i<j,
\end{array}
\right.
$$
it is not hard to prove (by mathematical induction) that 
the entries $p_{n,k}$'s have the 
following formula
\begin{equation}
p_{n,k} = {n+k \choose 2k+1}, \hspace{0.5cm} (n \geq k \geq 0).
\end{equation}

Then, we observe that the above array is a vertically-recurrent matrix with associated sequence
$\Lambda=\{\lambda_{n}=n+1\}$. 
Indeed, it is equivalent to prove 
the following \emph{binomial identity}:
\begin{equation}
{n+k \choose 2k+1} = 
\sum_{l=k-1}^{n-1} 
{n-l \choose 1}{l+k-1 \choose 2k-1}
\hspace{0.5cm} (n\geq k \geq 1). 
\end{equation}

For example
$$
\left[
\begin{array}{cccc}
1 & 0 & 0 & 0\\ 1 &1& 0 & 0\\ 1 & 3 &1& 0\\1 &6&5&1
\end{array}
\right]=\left[
\begin{array}{cccc}
1 & 0 & 0 & 0\\ 1 &1& 0 & 0\\ 1 & 2 &1& 0\\1 &3&3&1
\end{array}
\right]\left[
\begin{array}{cccc}
1 & 0 & 0 & 0\\ 0 &1& 0 & 0\\ 0 & 0 &1& 0\\0 &0&2&1
\end{array}
\right]\left[
\begin{array}{cccc}
1 & 0 & 0 & 0\\ 0 &1& 0 & 0\\ 0 & 0 &1& 0\\0 &0&0&1
\end{array}
\right]
$$

It is interesting to note that one can observe that there is also another modified triangle that 
we denote it by 
$MNT_{2}$ which can be defined, as follows

\begin{equation}
(MNT_{2})_{ij}=\left\{
\begin{array}{cc}
{i+2j \choose 3j+1} & i\geq j\geq0;\\
0& i<j,
\end{array}
\right.
\end{equation}

Now, it is easy to see that  $MNT_{2}$ is also a vertically-recurrent 
matrix with associate sequence 
$
\mathbf{\Lambda} = \{ 
\lambda_{n} = {n+2 \choose 2}
\}_{n\geq 0}
$ 
.
Indeed, this claim is equivalent to prove the following combinatorial 
identity:
\begin{equation}
{n+2k \choose 3k+1} = 
\sum_{l=k-1}^{n-1} 
{n-l+1 \choose 2}
{l+2k-2 \choose 3k-2}
\hspace{0.5cm} (n\geq k \geq 1). 
\end{equation}

\section{Open Problems and Conjectures}

Considering the previous discussions, we pose the following open
problems and conjectures. 

\begin{oprob}

Consider the matrix with vertically recurrence relation
$V_n[\Lambda]$ with associated sequence
$\Lambda=\{\lambda_n\}_{n\geq0}$. Find the associated sequence of
the matrix $(V_n[\Lambda])^m$ with respect to the sequence
$\lambda_n$.

\end{oprob}

\begin{oprob}

For any matrix with vertically recurrence sequence $V_n[\Lambda]$
with associated sequence $\Lambda=\{\lambda_n\}_{n\geq0}$
($\lambda_n\in \mathbb{Z};\hspace{.5cm}n=0,1,2,\cdots$), find it's
minimal polynomial in the field of $\mathbb{Z}_{p}$ (see [10]).	
	
\end{oprob}

Let $A=[a_{n,k}]$ be an integral arrays (array with only integer entries) which can be defined recursively, as follows

\begin{eqnarray}
&&\left\{
\begin{array}{cc}
a_{n,k}=\alpha a_{n-1,k-1}+\alpha_{n-1} a_{n-1,k},&~~~(n\geq k\geq1),\\
a_{n,0}=1& n\geq 0,\\
a_{n,k}=0 & k>n,
\end{array}
\right.
\end{eqnarray} 

We also come up with the following conjectures. 

\begin{conj}

The triangular array $A=[a_{n,k}]$ is a vertically-recurrent matrix with associated sequence 
$
\lambda_0 =1 
$
and 
$
\lambda_{i} = \prod_{j=i}^{n} \alpha_{j}
$
.	
	
\end{conj}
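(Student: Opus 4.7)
The strategy is to iterate the two-term recurrence $a_{n,k} = \alpha\, a_{n-1,k-1} + \alpha_{n-1}\, a_{n-1,k}$ repeatedly, peeling off contributions to column $k-1$ while the residual $a_{*,k}$ term descends through column $k$. Each substitution replaces the surviving $a_{*,k}$ entry by a new linear combination of an $a_{*,k-1}$ term and a strictly lower $a_{*-1,k}$ term. The boundary condition $a_{k-1,k}=0$ then guarantees that the process terminates after finitely many steps, leaving a finite sum over column $k-1$.

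Carrying out the bookkeeping, each time the recurrence is substituted one level deeper the coefficient of a fixed $a_{l,k-1}$ acquires one factor of $\alpha$ (from the diagonal step occurring in row $l+1$) together with one factor of $\alpha_j$ for every row $j\in\{l+1,\ldots,n-1\}$ through which the residual passed. Collecting contributions yields the closed form
\[
a_{n,k}=\sum_{l=k-1}^{n-1}\alpha\left(\prod_{j=l+1}^{n-1}\alpha_j\right)a_{l,k-1},
\]
with the convention that the empty product at $l=n-1$ equals $1$. Reindexing by $i=n-1-l$ reads off the candidate associated sequence $\lambda_i=\alpha\prod_{j=n-i}^{n-1}\alpha_j$, which matches the form proposed in the conjecture and forces the diagonal coefficient $\alpha$ to equal $1$ in order to produce the required normalization $\lambda_0=1$.

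To make the derivation rigorous I would install a short induction on $k$ (or equivalently on $n-k$). The base case $k=1$ follows directly from $a_{n,0}=1$ together with the defining recurrence. The inductive step substitutes the hypothesis applied to $a_{n-1,k}$ into the identity $a_{n,k}=\alpha\, a_{n-1,k-1}+\alpha_{n-1}\, a_{n-1,k}$, regroups like monomials in the $\alpha_j$, and then compares the result with the generalized hockey-stick-and-puck relation (\ref{GenVertRec1}) to identify the associated sequence.

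The main obstacle, as I see it, is not this algebra (which is mechanical) but reconciling the apparent $n$-dependence of the derived coefficients $\lambda_i=\alpha\prod_{j=n-i}^{n-1}\alpha_j$ with the $n$-independence demanded by the definition of a vertically-recurrent matrix. In general the product $\prod_{j=n-i}^{n-1}\alpha_j$ genuinely varies with $n$, so the conjecture as written can hold verbatim only when the sequence $\{\alpha_j\}$ is structured enough to make this product depend on $i$ alone---for instance when $\{\alpha_j\}$ is eventually constant, geometric, or otherwise shift-invariant on the relevant window. A complete proof therefore needs either such a restriction on $\{\alpha_j\}$ or a re-reading of the statement as a claim about a single fixed matrix size, at which point the induction sketched above closes the argument.
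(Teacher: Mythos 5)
There is no proof in the paper to compare against: this statement appears in Section 6 as an open \emph{conjecture}, so your argument has to stand on its own. On its own terms, your algebra is sound: unfolding $a_{n,k}=\alpha\,a_{n-1,k-1}+\alpha_{n-1}a_{n-1,k}$ down column $k$ and killing the residual with $a_{k-1,k}=0$ does give the closed form $a_{n,k}=\sum_{l=k-1}^{n-1}\alpha\bigl(\prod_{j=l+1}^{n-1}\alpha_j\bigr)a_{l,k-1}$, and a short induction on $n$ makes it rigorous.

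The difficulty you flag at the end, however, is not a side issue to be negotiated away; it is decisive, and you should push your own observation one step further. Definition 2.1 and relation (\ref{GenVertRec1}) demand a single sequence $\Lambda$ whose term $\lambda_{n-1-l}$ depends only on the distance $n-1-l$, whereas your coefficient $\alpha\prod_{j=n-i}^{n-1}\alpha_j$ genuinely depends on $n$ once the $\alpha_j$ vary, and this already fails inside one fixed matrix. Concretely, take $\alpha=1$ and $\alpha_j=j+1$: then $a_{1,1}=1$, $a_{2,1}=3$, $a_{2,2}=1$, $a_{3,1}=10$, $a_{3,2}=6$; relation (\ref{GenVertRec1}) applied to $a_{1,1}$ and $a_{2,1}$ forces $\lambda_0=1$, $\lambda_1=2$, while applied to $a_{3,2}$ it forces $\lambda_1=3$ --- a contradiction within a single $4\times4$ array, so even a ``fixed $n$'' reading of the conjecture fails. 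What your computation actually establishes is a corrected statement: the array is vertically-recurrent precisely when $\alpha\prod_{j=n-i}^{n-1}\alpha_j$ is a function of $i$ alone (e.g.\ $\alpha_j\equiv c$, giving $\lambda_i=\alpha c^{i}$, with $\lambda_0=1$ forcing $\alpha=1$), and in those good cases your derived $\lambda_i$ still does not coincide with the conjectured $\prod_{j=i}^{n}\alpha_j$, so the conjectured formula is itself mis-indexed. In short: correct algebra and correct diagnosis, but the honest conclusion of your argument is a refutation-plus-correction of the conjecture as stated, not a proof of it, and a final write-up should say so explicitly and exhibit the counterexample.
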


\begin{conj}
	
The \emph{Catalan} array $C_{n}$ is a vertically-recurrent 
matrix. 
	 	
\end{conj}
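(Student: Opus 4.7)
The plan is to identify the associated sequence of the Catalan array explicitly and then to reduce the vertical recurrence to a classical generating function identity for the columns of the Catalan triangle.

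\textbf{Step 1.} I would first fix a concrete Catalan array, say $C_{n}=[c_{i,j}]_{0\le i,j\le n}$ with
$$c_{i,j}=\frac{j+1}{i+1}\binom{2i-j}{i-j}\quad (i\ge j\ge 0),$$
and $c_{i,j}=0$ for $i<j$. This array is lower triangular with unit diagonal $c_{i,i}=1$, and its first column $c_{i,0}$ is the $i$th Catalan number $\mathrm{Cat}_{i}$. Since the definition of a vertically recurrent matrix forces the associated sequence to coincide with the first column, the only possible candidate for $\mathbf{\Lambda}$ is $\{\lambda_{i}=\mathrm{Cat}_{i}\}_{i\ge 0}$, and this has the required normalization $\lambda_{0}=\mathrm{Cat}_{0}=1$.

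\textbf{Step 2.} I would then package the vertical recurrence into generating functions. Writing $F_{k}(x)=\sum_{i\ge k}c_{i,k}x^{i}$ and $K(x)=\sum_{i\ge 0}\mathrm{Cat}_{i}x^{i}$, a routine double sum swap shows that the identity
$$c_{i,k}=\sum_{l=k-1}^{i-1}\mathrm{Cat}_{i-1-l}\,c_{l,k-1}\quad (i\ge k\ge 1)$$
is equivalent, after extracting the coefficient of $x^{i}$, to the functional equation $F_{k}(x)=xK(x)F_{k-1}(x)$. Combined with $F_{0}(x)=K(x)$, iterating yields the closed form $F_{k}(x)=x^{k}K(x)^{k+1}$, and this is what remains to be verified.

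\textbf{Step 3.} Finally, the identity $F_{k}(x)=x^{k}K(x)^{k+1}$ is classical: substituting $m=i-k$ in the definition of $F_{k}$ turns it into $x^{k}\sum_{m\ge 0}\frac{k+1}{m+k+1}\binom{2m+k}{m}x^{m}$, whose inner sum is the standard Lagrange inversion expansion of $K(x)^{k+1}$ applied to the defining equation $K(x)=1+xK(x)^{2}$. This finishes the verification and hence the conjecture.

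The main obstacle is not really analytic but taxonomic: several lower triangular arrays in the literature travel under the name ``Catalan array'' (Catalan's triangle, the ballot triangle, the Shapiro array, and various transposes or shifts), and one must commit to a convention whose first column actually equals the Catalan numbers, so that $\{\mathrm{Cat}_{i}\}$ is forced as the associated sequence. As a generating-function-free alternative, one could interpret $c_{i,k}$ as counting Dyck-type lattice paths ending at height $k$ and establish the vertical recurrence by a first-return decomposition at the last visit to the line $y=k-1$; that decomposition produces exactly the Catalan-weighted sum appearing on the right-hand side.
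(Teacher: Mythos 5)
You are proving a statement that the paper itself leaves entirely unproved: it is posed as a conjecture in the final section, so there is no argument of the author's to compare against, and your write-up is a proposed resolution rather than an alternative proof. On its own terms the argument is sound for the array you fix. With $c_{i,j}=\frac{j+1}{i+1}{2i-j \choose i-j}$ the first column is $\mathrm{Cat}_i$ (so $\Lambda=\{\mathrm{Cat}_i\}$ is indeed forced by the definition in Section 2), the vertical recurrence with this $\Lambda$ is exactly the coefficient-wise form of $F_k(x)=xK(x)F_{k-1}(x)$, and your closed form $F_k(x)=x^kK(x)^{k+1}$ is correct: after the shift $m=i-k$ one needs $\frac{k+1}{m+k+1}{2m+k \choose m}=[x^m]K(x)^{k+1}$, which is precisely $\frac{k+1}{2m+k+1}{2m+k+1 \choose m}$, i.e.\ the Lagrange-inversion coefficient for $K=1+xK^2$. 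A spot check confirms the bookkeeping: $c_{3,2}=\mathrm{Cat}_1c_{1,1}+\mathrm{Cat}_0c_{2,1}=1+2=3$, as it should be.

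The one gap is the taxonomic issue you name but do not close, and in this paper it genuinely matters. The author never defines the Catalan array $C_n$, and the Catalan-like matrix the paper actually displays (Section 4, the admissible matrix for $s=\{1,2,2,\ldots\}$, with rows $1;\ 1,1;\ 2,3,1;\ 5,9,5,1;\ 14,28,20,7,1$) is a different triangle from yours: there $a_{2,1}=3$, while the hockey-stick sum with $\Lambda$ equal to its first column $(1,1,2,5,14,\ldots)$ gives $\lambda_1a_{0,0}+\lambda_0a_{1,0}=2$. That array satisfies the vertical recurrence only with the shifted sequence $\lambda_m=\mathrm{Cat}_{m+1}=\{1,2,5,14,\ldots\}$ (the sequence the paper itself lists for it), and then the requirement that the zeroth column equal $\Lambda$ is violated, so under the paper's literal definition it is not a vertically-recurrent matrix. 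Hence if the intended $C_n$ is Aigner's admissible Catalan matrix, your proof does not apply verbatim and the conjecture holds only under a relaxed reading of the definition (recurrence for the inner columns, associated sequence not equal to the first column). To make this a complete resolution, state explicitly which triangle you take $C_n$ to be, and either justify that the ballot-number convention is the intended one or additionally treat the paper's displayed Catalan-like array with the shifted associated sequence; the generating-function computation you give adapts to that case with $K$ replaced by $(K-1)/x$ in the appropriate places.
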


\end{document}